\newtheorem{theorem}{Theorem}
\newtheorem{acknowledgment}[theorem]{Acknowledgment}
\newtheorem{corollary}[theorem]{Corollary}
\newtheorem{proposition}[theorem]{Proposition}
\newtheorem{remark}[theorem]{Remark}
\newenvironment{proof}[1][Proof]{\textbf{#1.} }{\ \rule{0.5em}{0.5em}}
\begin{document}
\author {Martial Longla\\ University of Cincinnati\\Department of mathematical sciences\\PO Box
210025, Cincinnati, Oh 45221-0025, USA.\\martiala@mail.uc.edu}
\title{Remarks on the speed of convergence of mixing coefficients and applications}
\maketitle

\abstract{In this paper, we study dependence coefficients for copula-based Markov chains. We provide new tools to check the convergence rates of mixing coefficients of copula-based Markov chains. We study Markov chains generated by the Metropolis-hastings algorithm and give conditions on the proposal that ensure exponential $\rho$-mixing, $\beta$-mixing and $\phi$-mixing. A general necessary condition on symmetric copulas to generate exponential $\rho$-mixing  or $\phi$-mixing is given.  At the end of the paper, we comment and improve some of our previous results on mixtures of copulas. 
}

\bigskip 
Key words: Markov chains, copula, mixing conditions, reversible processes, ergodicity, Metropolis-hastings.

\bigskip
AMS 2000 Subject Classification: Primary 60J20, 60J35, 37A30.

\section{Introduction}
This work is motivated by questions raised after reading Chen and al. (2009) \cite{Wei} and Beare (2010) \cite{Beare}. Chen and al.(2009) \cite{Wei} have shown that Markov chains generated by the Clayton, Gumbel or Student copulas are geometrically ergodic. They used in their paper quantile transformations and small sets to show geometric ergodicity, but could not handle for instance the mixture of these copulas. In a recent paper, Longla and Peligrad (2012) \cite{Martial1} have shown that these examples are actually exponential $\rho$- mixing. We have also answered the open question on geometric ergodicity of convex combinations of geometrically ergodic reversible Markov chains. 

Quantifying the dependence among two or more random variables has been an enduring task for statisticians. Copulas are full measures of dependence among components of random vectors. Unlike marginal and joint distributions, which are directly observable, a copula is a hidden dependence structure that couples a joint distribution with its marginals. An early statistical application of copulas was given by Clayton (1978) \cite{Clayton}, where the dependence between two survival times in a multiple events study is modeled by the so-called Clayton copula
\begin{equation*}
C(x,y)=(x^{-\alpha}+y^{-\alpha}-1)^{-1/\alpha} \hskip3mm \alpha\geq 0.
\end{equation*}
The literature on copulas is growing fast. An excellent overview, guide to the literature and applications belongs to Embrechts and al. (2003) \cite{Embrechts}. In later research into copulas, a driving force has been in financial risk management, where they are used to model dependence among different assets in a portfolio.  Nelsen's monograph (2006) \cite{Nelsen} can be regarded as one of the best books for an introduction to copulas.
 
\subsection{Definitions}
\subsubsection{2-Copulas}{A 2-copula is a function $C:[0,1]\times [0, 1] \rightarrow [0, 1]=I$ for which $C(0, y)=C(x, 0)=0$ ($C$ is grounded), $C(1, x)= C(x, 1) = x $(each coordinate is uniform on $I$) and for all $[x_{1}, x_{2}]x[y_{1}, y_{2}]\subset I^{2}$, $C(x_{1}, y_{1})+C(x_{2}, y_{2})-C(x_{1}, y_{2})-C(x_{2}, y_{1})\geq 0.$ }
The definition doesn't mention probability, but in fact these conditions imply that $C$ is the joint cumulative distribution function of two random variables with margins uniform on $I$. It follows from the definition (see \cite[chapter 1]{Nelsen}) that the function $C$ is non-decreasing in each of the parameters, has partial derivatives almost everywhere with values between $0$ and $1$. The partial derivative of $C(x,y)$ with respect to x is a non-decreasing function of $y$ and conversely. A convex combination of 2-copulas is a 2-copula.

If $X_{1}, X_{2}$ are random variables with joint distribution $F$ and marginal distributions $F_{1},F_{2} $, then the function $C(x,y)$ defined  by $C(F_{1}(x_{1}), F_{2}(x_{2}))=F(x_{1}, x_{2})$  is a copula. Moreover, if the random variables are continuous, then the copula is uniquely defined by the joint distribution and the marginal distributions by the formula $F(F^{-1}_{1}(x_{1}), F^{-1}_{2}(x_{2}))=C(x_{1}, x_{2}).$ This fact is known as Sklar's theorem. The implication of the Sklar’s Theorem is that, after standardizing the effects of marginals, the dependence among components of $X=(X_{1},X_{2})$ is fully described by the copula. Indeed, most conventional measures of dependence can be explicitly expressed in terms of the copula.
We will use in this paper the following conventional notation:   $||g||^{2}_{2}=\int_{I}g^{2}(x)dx$, $A_{,i}(x_{1},x_{2})=\frac{\partial A(x_{1},x_{2})}{\partial x_{i}}$, $c(x,y)$ will be used for the density of $C(x,y)$, and $\mathcal{R}$ will be used for the Borel $\sigma$-algebra of $I$.

\subsubsection{Copulas and Markov processes}
 Copulas have been shown to be a more flexible way to define a Markov process, as in the case when one suspects that the marginal distributions of the states are not related to the distribution of the initial state. Using copulas will allow changes in single marginal distributions, without having to change all other distributions in the chain.

A stationary Markov chain $(X_{n}, n\in\mathbb{Z})$ can be defined by a copula $(C(x,y))$ and a one dimensional marginal distribution. For stationary Markov chains with uniform marginals on $[0,1]$, the transition probabilities for all $n\in \mathbb{Z}$ are $P(X_{n}\in A|X_{n-1}=x)=C_{,1}(x,y)$ for sets $A=(-\infty, y]$ (for more details, see Darsaw and al. (1992) \cite[theorem 3.1]{Darsow}).  This relationship was used by Chen and al. (2009) \cite{Wei} to show that stationary Markov processes defined by the Clayton, Gumbel or Student copulas are geometrically ergodic. 

\subsubsection{Dependence coefficients}
Many dependence coefficients have been studied in the literature, such as $\alpha_{n}$, $\beta_{n}$, $\rho_{n}$, $\phi_{n}$ among others. In this paper, we will mainly use the last 3 coefficients defined as follows. 

Given $\sigma$-fields $\mathscr{A},\mathscr{B}$:
$$\beta(\mathscr{A},\mathscr{B})=\mathbb{E}\sup_{B\in \mathscr{B}}|P(B|\mathscr{A})-P(B)|$$  
$$\rho(\mathscr{A},\mathscr{B})=\sup_{f\in L^{2}(\mathscr{A}),g\in L^{2}(\mathscr{B})}corr(f,g),$$
$$\phi(\mathscr{A},\mathscr{B})=\sup_{B\in \mathscr{B}, A\in \mathscr{A}, P(A)>0}|P(B|A)-P(B)|$$
Given the alternative form of the transition probabilities for a Markov chain generated by a copula and a marginal distribution with strictly positive density, it was shown in Longla and Peligrad (2012) \cite{Martial1} that these coefficients have the following simple form when the copula is absolutely continuous with density $c$, $\mathscr{A}=\sigma(X_{i}, i\leq 0)$ and $\mathscr{B}=\sigma(X_{i}, i\ge n)$:

$$ \beta_{n}=\int_{0}^{1}\sup_{B\in \mathcal{R}}|\int_{B}(c_{n}(x,y)-1)dy|dx,$$
$$\phi_{n}=\sup_{B\in \mathcal{R}}ess\sup_{x}|\int_{B}(c_{n}(x,y)-1)dy|,$$
$$\rho_{n}=\sup\{\int^{1}_{0}\int^{1}_{0}c_{n}(x,y)f(x)g(y)dxdy : ||g||_{2}=||f||_{2}=1, \mathbb{E}(f)=\mathbb{E}(g)=0\}.$$
Here, $c_n$ is the density of the random vector $(X_{0}, X_{n})$. In general the following inequalities hold (see Bradley (2007) \cite[Theorem 7.4, 7.5]{Brad} for more.):
\begin{eqnarray}
\beta_{n}\leq \phi_{n}, \quad 
 \rho_{n} \leq 2\sqrt{\phi_{n}}, \quad \rho_{n} \leq (\rho_{1})^{n}.
\end{eqnarray}
These coefficients are defined to assess the dependence structure of the Markov process, and provide necessary conditions for CLT and functional CLT and their rates of convergence. Some examples can be found in Peligrad (1997) \cite{Magda}, Peligrad (1993) \cite{Magda2} and the references therein. A stochastic process is said to be $\alpha$-mixing if $\alpha_{n}\rightarrow 0$; $\beta$- mixing if $\beta_{n}\rightarrow 0$ or $\rho$-mixing if $\rho_{n}\rightarrow 0$. The process is exponentially mixing, if the convergence rate is exponential.
A stochastic process is said to be geometrically ergodic, if for some $a\in (0,1)$ , and $n \in \mathbb{N},$ 
\begin{equation*}
\sup_{B\in \mathcal{R}}|P(X_{n}\in B|X_{0}=x)-P(X_{n}\in B)|\leq a^{n}W(x), \hskip3mm \mbox{with} \hskip2mm \mathbb{E}(W(X_{0}))< \infty.
\end{equation*}
By Theorem 2.1 in Nummelin and Tuominen (1982) \cite{Nemm}, {\it geometric ergodicity is equivalent to exponential $\beta$-mixing.} So, whenever we encounter the term, we may understand either of the two concepts. The main results on this topic can be found in Darsow and al. (1992) \cite{Darsow}, de la Pe\~{n}a and Gin\'{e} (1999) \cite{DG}, Ibragimov and Lentzas (2009) \cite{IL}. They characterized copula-based Markov chains. Joe (1997) \cite{Joe} proposed a class of parametric (strictly) stationary Markov models based on parametric copulas and parametric invariant distributions. Their setup of the problem was modified by Chen and Fan (2006) \cite{Chen-Fan}, who studied a class of semi-parametric stationary Markov models based on parametric copulas and nonparametric invariant distributions and analyzed the strength of dependence in the Markov chain. They showed that the temporal dependence measure is fully determined by the properties of the copulas.  Studying the strength of the dependence, Beare (2010) \cite{Beare} provided simple sufficient conditions for geometric $\beta $-mixing for copula-based Markov chains. He also provided the proof based on simulation, that the Clayton copula provides exponential $\rho$-mixing. The theoretical proof of this fact and many others was given by Longla and Peligrad (2012) \cite{Martial1}. 

In this paper we derive the copula of the Metropolis-hastings algorithm for a general proposal, and use one of our previous results to provide a condition for exponential $\rho$-mixing (or say a way to choose the proposal to guarantee exponential rates of convergence). We also have some remarks on the rate of convergence of Markov chains generated by convex combinations of copulas. These results are applied to the Frechet and Mardia families of copulas.

The paper is structured as follows: In section two below we provide new results on exponential $\rho$-mixing, and exponential $\beta$-mixing for some families of Metropolis-Hastings algorithms. In section 3 we comment and improve some of our results from Longla and Peligrad (2012) \cite{Martial1} on convex combinations of copulas and apply them to some families of copulas.

\section{Sufficient conditions for $\rho$-mixing}
 We shall recall that exponential $\rho$-mixing is equivalent to $\rho_{1}<1$ for Markov chains. This fact will be used in this section to improve some known results. 
\subsection{A new bound on $\rho_{1}$}
Combining Proposition 2 of Longla and Peligrad (2012) \cite{Martial1} and Theorem 3.1 of Beare (2010) \cite{Beare}, we have the following.

\begin{theorem} \label{Theo1}
\quad

A copula-based Markov chain generated by a symmetric copula with square integrable density is geometric $\beta$-mixing if it is geometric $\rho$-mixing. Moreover, if the density is strictly positive on a set of measure 1, then $\rho$-mixing also follows from geometric ergodicity.
\end{theorem}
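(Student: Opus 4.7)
Both halves of the theorem rest on treating the Markov operator $T$ defined by $(Tf)(x)=\int_0^1 c(x,y)f(y)\,dy$ as a self-adjoint operator on $L^2([0,1])$: symmetry of $C$ amounts to $c(x,y)=c(y,x)$, hence reversibility of the chain, while the $L^2$ hypothesis on $c$ forces the restriction of $T$ to the mean-zero subspace $L^2_0$ to be a Hilbert--Schmidt operator. Its spectrum is therefore a discrete sequence $\{\lambda_i\}$ with $0$ as the only possible accumulation point, accompanied by orthonormal eigenfunctions $\{\phi_i\}\subset L^2_0$. Proposition~2 of Longla and Peligrad (2012) then supplies the spectral expansion
\[
c_n(x,y)-1=\sum_i\lambda_i^{\,n}\phi_i(x)\phi_i(y),\qquad \rho_1=\sup_i|\lambda_i|,
\]
which is the workhorse for both implications.

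For the first direction (geometric $\rho$-mixing implies geometric $\beta$-mixing) I would start from
\[
\beta_n\le\int_0^1\int_0^1|c_n(x,y)-1|\,dy\,dx\le\|c_n-1\|_{2},
\]
obtained by bounding the inner supremum by the $L^1$ norm and then applying Cauchy--Schwarz on the unit square. Parseval applied to the spectral expansion yields $\|c_n-1\|_2^2=\sum_i\lambda_i^{2n}\le\rho_1^{2(n-1)}\sum_i\lambda_i^{2}=\rho_1^{2(n-1)}\|c-1\|_2^2$, and the last factor is finite precisely because $c$ is square integrable. Hence $\beta_n\le\|c-1\|_2\,\rho_1^{n-1}$, and since for a Markov chain $\rho$-mixing is equivalent to $\rho_1<1$ by the submultiplicativity $\rho_n\le\rho_1^n$ already recorded in the introduction, this forces geometric $\beta$-mixing.

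For the second direction I would combine strict positivity with Theorem~3.1 of Beare (2010). Positivity of $c$ on a set of full Lebesgue measure makes the chain $\psi$-irreducible and aperiodic, so the geometric ergodicity hypothesis translates, via the Nummelin--Tuominen equivalence recalled in the introduction, into exponential $\beta$-mixing. Under reversibility, Beare's Theorem~3.1 then produces a spectral gap for $T$: if the $|\lambda_i|$ clustered at $1$, the corresponding eigenfunctions would contradict the exponential total-variation rate. This spectral gap is exactly $\rho_1<1$, which by $\rho_n\le\rho_1^{n}$ gives exponential $\rho$-mixing, hence in particular $\rho$-mixing.

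The principal obstacle I anticipate is the second implication: turning the essentially $L^1$ statement of geometric ergodicity into the $L^2$ spectral statement $\rho_1<1$. Reversibility is indispensable here --- non-reversible geometrically ergodic chains may fail to be $\rho$-mixing --- and the strict positivity assumption is what rules out periodic or reducible behaviour that would otherwise pin the spectral radius of $T$ at $1$ without violating ergodicity.
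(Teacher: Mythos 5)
Your argument is essentially the paper's: the published proof consists of the single line ``combine Proposition 2 of Longla and Peligrad (2012) with Theorem 3.1 of Beare (2010),'' and your Hilbert--Schmidt/Parseval computation ($\beta_n\le\|c_n-1\|_2\le\rho_1^{\,n-1}\|c-1\|_2$) is exactly the content of Beare's Theorem 3.1, while the converse direction is the content of the Longla--Peligrad results (which rest on Roberts and Rosenthal (1997) for reversible chains, not on Beare's Theorem 3.1 as you attribute it). The one caveat is that the genuinely hard step --- upgrading geometric ergodicity of a reversible, irreducible chain to the $L^2$ spectral gap $\rho_1<1$ --- is asserted rather than proved in your sketch, but the paper leaves it at the level of citation as well, so this is an attribution quibble rather than a mathematical gap.
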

\bigskip
Beare (2010) \cite{Beare} proved that for a copula with density bounded away from zero we have exponential $\rho$-mixing. These conditions imply actually a stronger mixing condition ($\phi$) as shown by Longla and Peligrad (2012) \cite{Martial1}. These conditions are relaxed in the following theorem.
\bigskip
\begin{theorem} \label{Theo2}
\quad

If the copula of the Markov process is such that there exists nonnegative functions $\varepsilon_{1},$ $\varepsilon_{2}$ defined on $[0,1]$ for which the density of the absolute continuous part of the copula denoted $c(x,y)$ satisfies the inequality
$$c(x,y)\geq \varepsilon_{1}(x)+\varepsilon_{2}(y)$$ with $\varepsilon_{1}$, $\varepsilon_{2} \in \mathbb{L}_{1}[0,1]$ such that at least one of the two functions has a non-zero integral, then the process is exponential $\rho$-mixing. \end{theorem}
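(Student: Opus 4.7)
The plan is to reduce the statement to showing $\rho_1<1$, using the equivalence of exponential $\rho$-mixing and $\rho_1<1$ recalled at the start of Section 2. Writing $a=\int_0^1\varepsilon_1(x)\,dx$ and $b=\int_0^1\varepsilon_2(y)\,dy$, the hypotheses give $a,b\geq 0$ with $a+b>0$; also, integrating the pointwise bound $c(x,y)\geq \varepsilon_1(x)+\varepsilon_2(y)$ against the uniform marginals forces $a+b\leq 1$.

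The natural decomposition to work with is
\[
c(x,y)=\varepsilon_1(x)+\varepsilon_2(y)+r(x,y),\qquad r(x,y)\geq 0.
\]
Integrating in $y$ and using that the marginals of the copula are uniform, one gets $\int_0^1 r(x,y)\,dy=1-\varepsilon_1(x)-b$, and symmetrically $\int_0^1 r(x,y)\,dx=1-a-\varepsilon_2(y)$. Now take any $f,g\in L^2[0,1]$ with $\|f\|_2=\|g\|_2=1$ and $\mathbb{E}(f)=\mathbb{E}(g)=0$. In the expression
\[
\int_0^1\!\!\int_0^1 c(x,y)f(x)g(y)\,dx\,dy,
\]
the terms arising from $\varepsilon_1(x)$ and $\varepsilon_2(y)$ vanish by the centering of $g$ and of $f$ respectively, so everything reduces to $\int\!\!\int r(x,y)f(x)g(y)\,dx\,dy$.

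The key step is to apply the Cauchy--Schwarz inequality with respect to the nonnegative measure $r(x,y)\,dx\,dy$:
\[
\Bigl(\int\!\!\int r(x,y)f(x)g(y)\,dx\,dy\Bigr)^2\leq \int\!\!\int r(x,y)f^2(x)\,dx\,dy\cdot \int\!\!\int r(x,y)g^2(y)\,dx\,dy.
\]
Using the marginal computations above together with $\varepsilon_1,\varepsilon_2\geq 0$ and $\|f\|_2=\|g\|_2=1$, the right-hand side is bounded by $(1-b)(1-a)$. Taking the supremum over admissible $f,g$ in the formula for $\rho_1$ recalled in the introduction, one concludes $\rho_1\leq \sqrt{(1-a)(1-b)}<1$, where strict inequality uses $a+b>0$. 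This gives exponential $\rho$-mixing.

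There is no real obstacle; the only point requiring care is checking that the splitting of $c$ is valid (i.e.\ that $\varepsilon_1,\varepsilon_2$ must automatically be essentially bounded with $a+b\leq 1$, which follows from the uniform marginals), and noting that the centering of $f$ and $g$ is precisely what kills the additive $\varepsilon_1(x)+\varepsilon_2(y)$ contribution so that only the nonnegative remainder $r$ needs to be handled.
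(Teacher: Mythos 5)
Your overall strategy is sound and, once one gap is repaired, it is a genuinely different (and in fact sharper) argument than the paper's. The gap: you reduce $\mathrm{corr}(f,g)=\int f(x)g(y)\,C(dx,dy)$ to $\iint r(x,y)f(x)g(y)\,dx\,dy$, and you compute $\int_0^1 r(x,y)\,dy=1-\varepsilon_1(x)-b$. Both steps silently assume the copula is absolutely continuous. The theorem only assumes the bound on the density of the \emph{absolutely continuous part}, and the intended application (the Metropolis--Hastings copula) has a nontrivial singular part concentrated on the diagonal, whose contribution $\int f(x)g(y)\,SC(dx,dy)$ is not killed by the centering of $f$ and $g$ and can be large. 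As written, your bound controls only the absolutely continuous piece. The fix is immediate: apply the same Cauchy--Schwarz inequality to the nonnegative measure $\lambda(dx,dy)=C(dx,dy)-(\varepsilon_1(x)+\varepsilon_2(y))\,dx\,dy$ (nonnegative because its absolutely continuous part has density $c-\varepsilon_1-\varepsilon_2\geq 0$ and its singular part is that of $C$). Its marginals have densities $1-\varepsilon_1(x)-b$ and $1-a-\varepsilon_2(y)$ exactly, the $\varepsilon$-terms still integrate to zero against $f\otimes g$, and you recover
\begin{equation*}
\Bigl(\int f(x)g(y)\,C(dx,dy)\Bigr)^2=\Bigl(\int fg\,d\lambda\Bigr)^2\leq\int f^2\,d\lambda\cdot\int g^2\,d\lambda\leq(1-b)(1-a).
\end{equation*}

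With that repair, your route differs from the paper's. The paper uses the polarization identity $2f(x)g(y)=f^2(x)+g^2(y)-(f(x)-g(y))^2$, integrates against $C$, and bounds $\int(f-g)^2\,dC$ from below by $\int(f-g)^2(\varepsilon_1+\varepsilon_2)\,dx\,dy\geq a+b$, obtaining $\rho_1\leq 1-\tfrac{1}{2}(a+b)$. Your Cauchy--Schwarz argument on the remainder measure gives $\rho_1\leq\sqrt{(1-a)(1-b)}$, which by the AM--GM inequality is at most $1-\tfrac{1}{2}(a+b)$, so your constant is sharper. Your side observations ($a+b\leq 1$ and the essential boundedness of $\varepsilon_1,\varepsilon_2$, obtained by integrating the hypothesis against the uniform marginals) are correct and not needed by the paper's argument, but they make your marginal computations legitimate. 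Both proofs then conclude via the equivalence of $\rho_1<1$ with exponential $\rho$-mixing for Markov chains.
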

\bigskip

\begin{proof}
  Let $f$, $g$ be two functions with $||f||_{2}=||g||_{2}=1, \mathbb{E}(f(X))=\mathbb{E}(g(X))=0$. We have 

\begin{equation}
2f(x)g(y)=f^{2}(x)+g^{2}(y)-(f(x)-g(y))^{2}. \label{Al}
\end{equation}

Therefore,

$$2\int_{I^2}f(x)g(y)C(dx,dy)=\int_{I^2}f^{2}(x)C(dx,dy)+\int_{I^2}g^{2}(y)C(dx,dy)-\int_{I^2}(f(x)-g(y))^{2}C(dx,dy).$$
Using the fact that $\int_{I}C(dx,dy)=dx$ and $\int_{I}f^{2}(x)=\int_{I}g^{2}(x)dx=1$, we obtain
$$ \int_{I^2}f^{2}(x)C(dx,dy)=\int_{I}f^{2}(x)\int_{I}C(dx,dy)=\int_{I}f^{2}(x)dx=1=\int_{I^2}g^{2}(y)C(dx,dy).$$ 
On the other hand, using $c(x,y)\geq \varepsilon_{1}(x)+\varepsilon_{2}(y)$ on a set of Lebesgue measure 1,
$$\int_{I^2}(f(x)-g(y))^{2}C(dx,dy)\geq \int_{I^2}(f(x)-g(y))^{2}(\varepsilon_{1}(x)+\varepsilon_{2}(y))dxdy=$$
$$=\int_{I^2}(f^{2}(x)+g^{2}(y)-2f(x)g(y))(\varepsilon_{1}(x)+\varepsilon_{2}(y))dxdy= I_{a}+I_{b},$$
where
$$I_{b}=\int_{I^2}f^{2}(x)\varepsilon_{2}(y)dxdy+\int_{I^2}g^{2}(y)\varepsilon_{2}(y)dxdy-2\int_{I^2}f(x)g(y)\varepsilon_{2}(y)dxdy,
$$
$$I_{a}=\int_{I}f^{2}(x)\varepsilon_{1}(x)dx\int_{I}dy+\int_{I}\varepsilon_{1}(x)dx\int_{I}g^{2}(y)dy
-2\int_{I}f(x)\varepsilon_{1}(x)dx\int_{I}g(y)dy.$$
The cross term is zero, $\int_{I}g^{2}(x)dx=1$ and $\int_{I}f^{2}(x)\varepsilon_{1}(x)dx\geq 0$, whence
$$I_{a}= \int_{I}f^{2}(x)\varepsilon_{1}(x)dx+\int_{I}\varepsilon_{1}(x)dx\geq \int_{I}\varepsilon_{1}(x)dx.$$
Similarly, $I_{b}\geq \int_{I}\varepsilon_{2}(y)dy$. Thus, 
$$-\int_{I^2}(f(x)-g(y))^{2}C(dx,dy)\leq -\int_{I}\varepsilon_{1}(x)dx-\int_{I}\varepsilon_{2}(y)dy.$$
Using formula (\ref{Al}) and integrating, we obtain
$$ 2\int_{I^2}f(x)g(y)C(dx,dy)\leq 2-(\int_{I}\varepsilon_{1}(x)dx+\int_{I}\varepsilon_{2}(y)dy).$$

Recalling the definition of correlation, it follows that 
\begin{equation}
corr(f,g)\leq 1-\frac{1}{2}(\int_{I}\varepsilon_{1}(x)dx+\int_{I}\varepsilon_{2}(y)dy). \nonumber
\end{equation}
Because this holds for all such $f$ and $g$, it holds for $f$ and $-g$. Thus 

\begin{equation}
\sup_{f,g}|corr(f,g)|\leq 1-\frac{1}{2}(\int_{I}\varepsilon_{1}(x)dx+\int_{I}\varepsilon_{2}(y)dy).
\end{equation} 
Provided that one of the two integrals of the right hand side is non-zero (say $\varepsilon$). It follows that 
\begin{equation}
\rho_{1} \leq 1-\frac{1}{2}\varepsilon < 1.
\end{equation} 
 Combining this inequality with Theorem \ref{Theo1}, we obtain exponential $\rho$-mixing.
\end{proof}

Combining Theorem \ref{Theo2} with the second part of Theorem \ref{Theo1} and the comment after Theorem 4 in Longla and Peligrad (2012) \cite{Martial1}, we obtain the following corollary:

\begin{corollary} \label{corrMH}
Under the assumptions of Theorem \ref{Theo2}, if one of the following conditions holds.
\begin{enumerate}
\item The density of the copula is strictly positive on a set of Lebesgue measure 1;
\item The density is symmetric and square integrable;
\end{enumerate}
then the copula generates geometrically ergodic Markov chains.
\end{corollary}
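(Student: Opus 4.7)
The plan is to invoke Theorem \ref{Theo2} once and then peel off the two cases separately, matching each to the correct bridging result already cited in the paper.

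First, I would observe that the hypothesis on $c(x,y)$ in both items is exactly that of Theorem \ref{Theo2} (the lower bound $c(x,y)\geq \varepsilon_1(x)+\varepsilon_2(y)$ with at least one of the integrals nonzero is part of ``under the assumptions of Theorem \ref{Theo2}''). Applying Theorem \ref{Theo2} immediately yields $\rho_1<1$, and hence by the standard inequality $\rho_n\leq \rho_1^n$ recalled in (1), the Markov chain is exponential $\rho$-mixing. Since by Nummelin and Tuominen the conclusion ``geometrically ergodic'' is the same as ``exponential $\beta$-mixing'', the task reduces to upgrading exponential $\rho$-mixing to exponential $\beta$-mixing under either of the two extra assumptions.

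For case (2), this upgrade is exactly the first part of Theorem \ref{Theo1}: a copula-based Markov chain generated by a symmetric square integrable density that is geometric $\rho$-mixing is geometric $\beta$-mixing, which is the desired geometric ergodicity. For case (1), the first part of Theorem \ref{Theo1} does not apply directly (no symmetry assumption), so here I would appeal to the comment following Theorem~4 in Longla and Peligrad (2012), which is explicitly referenced in the statement of the corollary: under the strict positivity of the density on a set of full Lebesgue measure, exponential $\rho$-mixing already implies exponential $\beta$-mixing (equivalently, geometric ergodicity). Combining Theorem \ref{Theo2} with this remark finishes case (1).

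There is essentially no combinatorial or analytic obstacle in the proof; it is a straightforward composition of Theorem \ref{Theo2} with two previously established implications. The only point that needs care is being explicit that ``geometrically ergodic'' is used via the Nummelin--Tuominen equivalence with exponential $\beta$-mixing, so that both assumptions in the corollary genuinely deliver the $\beta$-mixing conclusion rather than merely the weaker $\rho$-mixing already provided by Theorem \ref{Theo2}.
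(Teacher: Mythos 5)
Your proof is correct and follows essentially the same route the paper intends: Theorem \ref{Theo2} gives $\rho_1<1$, hence exponential $\rho$-mixing, and the upgrade to geometric ergodicity comes from the symmetric square-integrable part of Theorem \ref{Theo1} in case (2) and from the comment after Theorem~4 of Longla and Peligrad (2012) (positivity of the density giving absolute regularity, which together with exponential $\rho$-mixing yields geometric ergodicity) in case (1). If anything, your attribution is slightly cleaner than the paper's, which loosely cites the ``second part'' of Theorem \ref{Theo1} even though that part states the converse implication.
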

\bigskip

\subsection{Introduction to the Metropolis Hastings algorithm}

The Metropolis-Hastings algorithm is one of the tools that statisticians use to simulate data from not completely known distributions> They are used especially when the distributions are known up to the normalizing constant or when it is impossible to have independent observations from the given distributions. The investigator therefore looks for a Markov chain, whose transition probabilities converge to an invariant distribution equivalent to the necessary incomplete one. A sample from the stationary phase of this Markov chain is then used to study questions related to the initial unknown density. In order to achieve this goal, one should know the convergence rates of the dependence coefficients of the Markov chain. The rates of convergence provide accuracy of the approximation of the start of the stationary phase of the chain. The mixing rates are important also for inference using statistics of the simulated data (they provide corresponding limit theorems for inference and testing). All these properties are strongly related to the choice of the proposal for the simulation process. We will derive here the formula of the copula of these  processes for some classes of proposals, that one can use along with other theorems in this paper to choose a proposal that provides necessary rates of convergence of the transition probabilities.

Let $f$ be the invariant density we want to generate observations from using the Metropolis-Hastings algorithm. Assume $f$ is strictly positive on its domain with cumulative distribution function $F$. Let $q(x,y)$ be the proposal defined and positive on the domain of $f$ (q is a density for each fixed value of $x$).
 Define, 
$$\alpha(x,y)=\min{\{\frac{f(y)q(y,x)}{f(x)q(x,y)}, 1\}}.$$
Start the chain by a value in the domain of the distribution of interest, then sample potential new states from the proposal distribution $q$ which must be easy to sample from, and depends on the current state of the chain. More precisely, if the chain is initialized at $X_{0} = x_{0}$, then at any time $t\ge 1$ the Metropolis-Hastings algorithm performs the following steps:
\begin{enumerate}
\item Generate an observation $y$ from  q, using $x_{t-1}$ as parameter $x$,
\item Compute the acceptance ratio $\alpha(x,y)$ defined above,
\item With probability $\alpha(x,y)$, set $X_{t} = y$ and otherwise $X_{t} = x_{t-1}.$
\end{enumerate}
Implementation of step 3 requires generating an observation $u$ from the uniform distribution on $[0,1]$, then setting  $X_{t} = y$ if $\alpha(x,y)\leq u$, otherwise $X_{t} = x_{t-1}.$ It is therefore easy to derive the transition kernel of the obtained Markov chain:
\begin{equation}
P(x,dy)=q(x,y)\alpha(x,y)dy+(1-\int{q(x,y)\alpha(x,y)}dy)\delta_{x}(dy).
\end{equation}

The independent Metropolis-Hastings algorithm is obtained by choosing an independent proposal (which is independent of the current state $q(y)$). A list of applied examples of this algorithm can be found in Craiu (2011) \cite{Radu}. To simplify the formula of $\alpha$, statisticians often use a symmetric proposal. For a general proposal $q(x,y)$, the following holds.

\begin{proposition}
The copula of the Metropolis-Hastings algorithm with a proposal $q(x,y)$ and an invariant density $f$ with cumulative distribution $F$ is given by
$$C(u,v)=AC(u,v)+SC(u,v),$$
$$AC(u,v)=\int_{0}^{u}\int_{0}^{v}\frac{q(F^{-1}(t),F^{-1}(z))}{f(F^{-1}(z))}\alpha(F^{-1}(t),F^{-1}(z))dzdt$$
$$SC(u,v)=\int_{0}^{\min\{u,v\}}\big[ 1-\int_{0}^{1}\frac{q(F^{-1}(t),F^{-1}(z))}{f(F^{-1}(z))}\alpha(F^{-1}(t),F^{-1}(z))dz\big]dt
.$$ 
\end{proposition}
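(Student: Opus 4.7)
The plan is to compute the joint distribution $H$ of the stationary pair $(X_0,X_1)$ directly from the transition kernel, then invoke Sklar's theorem to extract the copula $C$ via $C(u,v)=H(F^{-1}(u),F^{-1}(v))$, and finally perform the substitutions $x=F^{-1}(t)$, $y=F^{-1}(z)$ to rewrite everything in copula coordinates. The decomposition $C=AC+SC$ will arise naturally from the two pieces of the transition kernel: the absolutely continuous piece $q(x,y)\alpha(x,y)\,dy$ contributes $AC$, while the singular piece $(1-\int q(x,y)\alpha(x,y)\,dy)\delta_x(dy)$ contributes $SC$.

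First I would write
\begin{equation*}
H(a,b)=\int_{-\infty}^{a} f(x)\,P(x,(-\infty,b])\,dx,
\end{equation*}
and split $P(x,(-\infty,b])$ into the two pieces of the kernel. The absolutely continuous part gives $\int_{-\infty}^{a}\int_{-\infty}^{b} f(x)\,q(x,y)\alpha(x,y)\,dy\,dx$. The singular part, because the Dirac mass $\delta_x$ is supported at $x$, contributes only when $x\le b$, so it equals $\int_{-\infty}^{\min(a,b)} f(x)\bigl(1-\int q(x,y)\alpha(x,y)\,dy\bigr)dx$; this is where the truncation $\min\{u,v\}$ in $SC$ originates.

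Next I would apply Sklar's theorem. Since $f$ is strictly positive on its support, $F$ is strictly increasing and continuous, so the copula is uniquely given by $C(u,v)=H(F^{-1}(u),F^{-1}(v))$. Then I perform the change of variables $t=F(x)$ and $z=F(y)$ in both pieces: under $t=F(x)$ we have $dt=f(x)\,dx$, so any factor $f(x)\,dx$ simply becomes $dt$; under $z=F(y)$ we have $dy=dz/f(F^{-1}(z))$, which produces the $f(F^{-1}(z))$ in the denominator of $AC$ and of the bracketed expression in $SC$. The limits of integration turn into $[0,u]$, $[0,v]$, and $[0,\min\{u,v\}]$ respectively, yielding the stated formulas.

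The main (very mild) obstacle is careful bookkeeping of the singular part: one must verify that the Dirac mass really does produce the $\min\{u,v\}$ truncation after the change of variable, using monotonicity of $F$ to translate $x\le\min(a,b)$ into $t\le\min\{u,v\}$, and that the density $f$ cancels correctly in the outer integral while surviving in the inner integral over $z$. Apart from this, the proof is a direct computation with no analytic subtleties, since $f>0$ guarantees that $F^{-1}$ is well defined and all substitutions are legitimate.
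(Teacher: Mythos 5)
Your proof is correct and follows essentially the same route as the paper: both are direct computations combining the Metropolis--Hastings transition kernel with Sklar's theorem and the change of variables $t=F(x)$, $z=F(y)$, with the Dirac mass producing the $\min\{u,v\}$ limit in $SC$. The only cosmetic difference is that you build the joint distribution $H(a,b)=\int_{-\infty}^{a}f(x)P(x,(-\infty,b])\,dx$ and then compose with $F^{-1}$, whereas the paper identifies $C_{,1}(F(x),F(y))=P(x,(-\infty,y])$ first and integrates in $u$ afterwards; these differ only in the order of the same integration.
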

\begin{proof}
Using the definition of the copula from the Sklar's Theorem and the relationship with the transition probability given in the Introduction, we obtain
$$C_{,1}(F(x),F(y))=P(x, (-\infty,y])=\int_{-\infty}^{y}P(x,dt).$$
Thus, using the chosen proposal and a simple change of variable, we obtain
$$C_{,1}(u,v)=\int_{-\infty}^{F^{-1}(v)}\alpha(F^{-1}(u),t)q(F^{-1}(u),t)dt+
(1-\int_{-\infty}^{\infty}\alpha(F^{-1}(u),t)q(F^{-1}(u),t)dt)\mathbb{I}(u\leq v)$$

The change of variable inside the integral ($z=F(t)$) and integration with respect to $u$ combined with uniqueness of the copula lead to:

\begin{eqnarray}
AC(u,v)=\int_{0}^{u}\int_{0}^{v}\frac{q(F^{-1}(t),F^{-1}(z))}{f(F^{-1}(z))}\alpha(F^{-1}(t),F^{-1}(z))dzdt \label{z1}\\
SC(u,v)=\int_{0}^{\min\{u,v\}}\big[ 1-\int_{0}^{1}\frac{q(F^{-1}(t),F^{-1}(z))}{f(F^{-1}(z))}\alpha(F^{-1}(t),F^{-1}(z))dz\big]dt
. \label{z2}\end{eqnarray} 
\end{proof}

$AC$ is called the absolute continuous part of $C$ and $SC$ is the singular part of $C$. 

For a strictly positive symmetric proposal $q(x,y)$, we note that $\alpha$ simplifies; (\ref{z1}) and (\ref{z2}) become
\begin{eqnarray}
AC(u,v)=\int_{0}^{u}\int_{0}^{v}\frac{q(F^{-1}(t),F^{-1}(z))}{f(F^{-1}(z))}\min\{\frac{f(F^{-1}(z))}{f(F^{-1}(t))},1\}dzdt \label{zx1}\\
SC(u,v)=\int_{0}^{\min\{u,v\}}\big[ 1-\int_{0}^{1}\frac{q(F^{-1}(t),F^{-1}(z))}{f(F^{-1}(z))}\min\{\frac{f(F^{-1}(z))}{f(F^{-1}(t))},1\}dz\big]dt \label{zx2}
\end{eqnarray} 

\subsection{Mixing rates of the Metropolis-Hastings based on the proposal}

The choice of the proposal is crucial in the convergence of the Metropolis-Hastings algorithm. Here, we provide some conditions to measure ahead of time the rate of convergence for various classes of Metropolis-Hastings algorithms.

\begin{corollary}
The Independent Metropolis-Hastings algorithm generates a geometrically ergodic and exponential $\rho$-mixing Markov chain if for almost all $y$, $q(y)\ge a f(y)$, for some $a > 0$.
\end{corollary}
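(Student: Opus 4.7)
My plan is to apply Theorem~\ref{Theo2} and the first alternative of Corollary~\ref{corrMH} by showing that the density of the absolutely continuous part of the copula of the Independent Metropolis-Hastings algorithm is bounded below by a positive constant whenever $q(y)\ge af(y)$.

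First I would specialize the Proposition to the independent case by substituting $q(x,y)=q(y)$ in (\ref{z1}). In this case the acceptance ratio becomes $\alpha(x,y)=\min\{f(y)q(x)/(f(x)q(y)),1\}$, and differentiating the expression for $AC(u,v)$ in $u$ and $v$ gives the density
$$c(t,z)=\frac{q(F^{-1}(z))}{f(F^{-1}(z))}\,\min\Bigl\{\frac{f(F^{-1}(z))\,q(F^{-1}(t))}{f(F^{-1}(t))\,q(F^{-1}(z))},\,1\Bigr\}.$$
Pushing the positive factor $q(F^{-1}(z))/f(F^{-1}(z))$ inside the minimum collapses this to the clean symmetric expression $c(t,z)=\min\{Q(t),Q(z)\}$, where $Q(u):=q(F^{-1}(u))/f(F^{-1}(u))$. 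This small algebraic step is the only real computation in the proof.

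Next I would translate the hypothesis: since $q\ge af$ on the support of $f$, and $F$ is absolutely continuous with density $f$, the change of variable $u=F(y)$ shows that $Q(u)\ge a$ for almost every $u\in[0,1]$. Hence $c(t,z)\ge a$ on a set of full Lebesgue measure in $[0,1]^2$. Writing $a=a/2+a/2$, the assumption of Theorem~\ref{Theo2} is met with $\varepsilon_1\equiv\varepsilon_2\equiv a/2$, each having nonzero integral, so the chain is exponentially $\rho$-mixing. The same pointwise bound makes the copula density strictly positive on a set of Lebesgue measure one, so the first condition in Corollary~\ref{corrMH} is satisfied, yielding geometric ergodicity.

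I do not expect a serious obstacle: once the identity $c(t,z)=\min\{Q(t),Q(z)\}$ is established, the rest is a direct invocation of the previously proved results. The only point requiring a little care is verifying that the inequality $q\ge af$, which is stated on the range of $X$, transfers to $Q\ge a$ almost everywhere on $[0,1]$ after the $F$-transform, but this is automatic since $F$ maps Lebesgue-null sets of $[0,1]$ to $f$-null sets in the $y$-coordinate and vice versa.
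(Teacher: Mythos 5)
Your proposal is correct and follows essentially the same route as the paper: identify the density of the absolutely continuous part of the copula, bound it below using $q\ge af$, and invoke Theorem~\ref{Theo2} together with Corollary~\ref{corrMH}. The only difference is that you simplify the density exactly to $\min\{Q(t),Q(z)\}$ and obtain the constant lower bound $c\ge a$, whereas the paper stops at the weaker bound $a\min\{af(F^{-1}(v))/q(F^{-1}(v)),1\}$ depending on $v$; your sharper bound (density bounded away from zero) would in fact also yield $\phi$-mixing by the results cited after Theorem~\ref{Theo1}.
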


A related theorem was proved by Mengersen and Tweedie (1996)\cite{Mengersen}. We provide here a proof of this statement based on Theorem \ref{Theo2}.

\begin{proof}
The density of the absolute continuous part of the copula of the Metropolis-Hastings algorithm is 
$$c(u,v)=\frac{q(F^{-1}(u),F^{-1}(v))}{f(F^{-1}(v))}\alpha(F^{-1}(u),F^{-1}(v)),$$
For the independence Metropolis-Hastings algorithm we obtain
$$c(u, v)\ge \frac{q(F^{-1}(v))}{f(F^{-1}(v))}\min\{a\frac{f(F^{-1}(v))}{q(F^{-1}(v))},1\}\ge a \min\{a\frac{f(F^{-1}(v))}{q(F^{-1}(v))},1\}.$$
Therefore, by Theorem \ref{Theo2}, the chain is exponential $\rho$ mixing. This chain being also reversible and absolutely regular (positive density of absolute continuous part of the copula on a set of measure 1), we can conclude that the chain is geometrically ergodic by Corollary \ref{corrMH}. 
\end{proof}

A similar theorem holds for a non-symmetric proposal $q(x,y)$.

\begin{theorem} \label{Theo6}
\quad

Assume that the proposal $q(x,y)$ is bounded from above on its support, the support is continuous and $q(x,y)\ge a f(y)$, then the Metropolis-Hastings algorithm generates an exponential $\rho$-mixing Markov chain. Moreover, if $f$ is strictly positive, then the generated Markov chain is geometrically ergodic, and if $f$ is bounded away from zero on a set of Lebesgue measure 1, then the generated Markov chain is uniformly mixing.
\end{theorem}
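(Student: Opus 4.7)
The plan is to reduce Theorem \ref{Theo6} to Theorem \ref{Theo2} and Corollary \ref{corrMH} by producing a uniform pointwise lower bound on the density of the absolutely continuous part of the copula. From the proposition preceding this theorem, that density is
$$c(u,v)=\frac{q(F^{-1}(u),F^{-1}(v))}{f(F^{-1}(v))}\,\alpha(F^{-1}(u),F^{-1}(v)).$$
Substituting the definition of $\alpha$ and pulling $q(F^{-1}(u),F^{-1}(v))/f(F^{-1}(v))$ inside the $\min$, I expect the ratio-telescoping to collapse the expression to the symmetric form
$$c(u,v)=\min\left\{\frac{q(F^{-1}(v),F^{-1}(u))}{f(F^{-1}(u))},\;\frac{q(F^{-1}(u),F^{-1}(v))}{f(F^{-1}(v))}\right\}.$$
This is the key computation; once it is in hand, the hypothesis $q(x,y)\ge a f(y)$ applied to each argument of the $\min$ gives $c(u,v)\ge a$ almost everywhere on the (continuous) support.

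With this pointwise bound, I would invoke Theorem \ref{Theo2} with the choice $\varepsilon_{1}(u)\equiv a/2$ and $\varepsilon_{2}(v)\equiv a/2$ (or equivalently $\varepsilon_{1}\equiv a,\varepsilon_{2}\equiv 0$). Both functions are in $\mathbb{L}_{1}[0,1]$ and at least one has strictly positive integral, so the theorem yields $\rho_{1}\le 1-a/2<1$, i.e.\ exponential $\rho$-mixing.

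For the geometric ergodicity statement I would appeal to Corollary \ref{corrMH}. When $f$ is strictly positive on its domain, the inequality $c(u,v)\ge a>0$ shows that the copula density of the absolutely continuous part is strictly positive on a set of Lebesgue measure $1$; condition (1) of the corollary then supplies geometric ergodicity.

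For the uniform mixing claim I would combine the lower bound $c(u,v)\ge a$ with a matching upper bound. Using the additional hypotheses that $q$ is bounded above by some constant $M$ on its support and $f\ge f_{0}>0$ on a set of full measure, the formula above yields $c(u,v)\le M/f_{0}$. Hence the copula density is bounded and bounded away from zero, and the result of Longla and Peligrad (2012) \cite{Martial1} recalled in the discussion preceding Theorem \ref{Theo2} converts this into exponential $\phi$-mixing. The main obstacle is the first paragraph: carrying out the algebraic simplification of $c(u,v)$ into the symmetric $\min$ expression while justifying that the singular part $SC$ plays no role for the lower-bound argument (which is guaranteed because Theorem \ref{Theo2} only uses the density of the absolutely continuous component), and checking that the continuity of the support makes $F^{-1}$ well defined so that the change-of-variables identities already derived in the proposition remain valid on a set of full measure.
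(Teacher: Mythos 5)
Your proof is correct and follows the same overall reduction as the paper's: lower-bound the density of the absolutely continuous part of the copula, then invoke Theorem \ref{Theo2} for exponential $\rho$-mixing, Corollary \ref{corrMH} (equivalently, absolute regularity plus reversibility) for geometric ergodicity, and the bounded-away-from-zero criterion of Longla and Peligrad (2012) \cite{Martial1} for $\phi$-mixing. The difference lies in how the lower bound is obtained. You compute the exact identity $c(u,v)=\min\left\{q(y,x)/f(x),\;q(x,y)/f(y)\right\}$ with $x=F^{-1}(u)$, $y=F^{-1}(v)$, which is valid because the nonnegative factor $q(x,y)/f(y)$ distributes over the minimum; the hypothesis $q(x,y)\ge a f(y)$ applied to both arguments then gives the constant bound $c(u,v)\ge a$. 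The paper instead keeps $\alpha$ inside a chain of inequalities and only reaches the weaker, $v$-dependent bound $c(u,v)\ge a\min\{a f(F^{-1}(v))/k,1\}$, which is why it needs the upper bound $q\le k$ for the $\rho$-mixing step and the hypothesis that $f$ is bounded away from zero for the $\phi$-mixing step. Your sharper computation shows those two hypotheses are actually superfluous: $c\ge a$ already gives a copula density bounded away from zero on a set of full measure, so exponential $\rho$-mixing, geometric ergodicity, and $\phi$-mixing all follow from it directly (and the matching upper bound $c\le M/f_0$ you derive is not needed, since the cited $\phi$-mixing criterion uses only the lower bound). Both arguments are valid; yours is cleaner, yields a slightly stronger statement, and also makes the symmetry of $c$, hence the reversibility asserted in the paper's proof, completely transparent.
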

\bigskip
 \bigskip

\begin{proof}
The density of the absolute continuous part of the copula of the Metropolis-Hastings algorithm is 
$$c(u,v)=\frac{q(F^{-1}(u),F^{-1}(v))}{f(F^{-1}(v))}\alpha(F^{-1}(u),F^{-1}(v)).$$
This is clearly a symmetric function. Thus, the metropolis-Hastings algorithm generates reversible Markov chains.  Given the assumptions of Theorem \ref{Theo6}, we have
$$c(u, v)\geq \frac{q(F^{-1}(u),F^{-1}(v))}{f(F^{-1}(v))}\min\{a\frac{f(F^{-1}(v))q(F^{-1}(v),F^{-1}(u))}{q(F^{-1}(u),F^{-1}(v))f(F^{-1}(u))},1\}\ge a \min\{a\frac{f(F^{-1}(v))}{q(F^{-1}(u),F^{-1}(v))},1\}.$$
Now, we use $q(x,y)<k$ for almost all $(x,y)$ as $q(x,y)$ is bounded from above on its domain. It follows that
$$c(u,v)\geq a \min\{a\frac{f(F^{-1}(v))}{k},1\}.$$ 
Therefore, by Theorem \ref{Theo2}, the chain is exponential $\rho$-mixing. As for the second part of the theorem, f is strictly positive on a set of measure 1 implies that the density of the absolute continuous part of the copula is positive on a set of measure 1. Thus, by Proposition 2 in Longla and Peligrad (2012) \cite{Martial1}, the Markov chain is absolutely regular. We can conclude that the chain is geometrically ergodic by Theorem 4 in Longla and Peligrad (2012) \cite{Martial1}. For $f$ bounded away from 0,  we obtain a density bounded away from zero, and by Theorem 8 in Longla and Peligrad (2012) \cite{Martial1}, the generated markov chain is $\phi$-mixing.
\end{proof}

\begin{remark}
Mengersen and Tweedie (1996) \cite{Mengersen} have shown that the fact that without this bound on the ratio in the case of the independent algorithm the chain may tend to "stick" in regions of low density is of considerable practical importance and is not merely a curiosity: seemingly sensible procedures give this behavior. They provided an example to illustrated this fact.
\end{remark}

\section{On mixing rates of mixtures copula-Based Markov chains}
\subsection{Convex combinations or mixtures of copulas}

Mixing coefficients for copula-based Markov chains have been widely studied recently. Here are some implications of the results in Longla and Peligrad (2012) \cite{Martial1}.

\begin{theorem}
\quad

Let $(C_{k}(x,y)$; $1\leq k\leq n)$ be $n$ copulas.
\begin{enumerate}

\item If $C_{1}(x,y)$ generates exponential $\rho$-mixing, then any convex combination of these copulas that contains $C_{1}(x,y)$ generates exponential $\rho$-mixing stationary Markov chains.
\item If $C_{2}$(x,y) generates an absolutely regular stationary Markov chain, and $C_{1}(x,y)$ generates an exponential $\rho$-mixing stationary Markov chain, then any convex combination containing $C_{1}(x,y)$ and $C_{2}(x,y)$ generates geometrically ergodic and exponential $\rho$-mixing Markov chains. 
\end{enumerate}
\end{theorem}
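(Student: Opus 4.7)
My plan is to exploit two facts: first, that a convex combination of copulas has absolutely continuous density equal to the corresponding convex combination of the component densities; second, that for a copula-based Markov chain, exponential $\rho$-mixing is equivalent to $\rho_1<1$, as recalled at the beginning of Section~2.

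For Part 1, I would write $C=\sum_{k=1}^n a_k C_k$ with $a_1>0$ and $\sum_k a_k=1$, so that the mixture density is $c=\sum_{k=1}^n a_k c_k$. Plugging into the variational definition of $\rho_1$ given in the introduction, and pulling the convex sum out of the supremum, yields
$$\rho_1(C)=\sup_{f,g}\int_{I^2}c(x,y)f(x)g(y)\,dx\,dy \leq \sum_{k=1}^n a_k\,\rho_1(C_k).$$
Since $\rho_1(C_1)\leq r<1$ by assumption and $\rho_1(C_k)\leq 1$ for every $k$ (being a correlation coefficient), this bound gives
$$\rho_1(C)\leq a_1 r+(1-a_1)=1-a_1(1-r)<1,$$
and the recalled equivalence then delivers exponential $\rho$-mixing.

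For Part 2, exponential $\rho$-mixing of any convex combination containing $C_1$ is immediate from Part 1. For geometric ergodicity I would invoke Proposition~2 of Longla and Peligrad (2012), which characterizes absolute regularity of a copula-based Markov chain by strict positivity of the density of the absolutely continuous part on a set of Lebesgue measure $1$. Since $C_2$ generates an absolutely regular chain, $c_2>0$ on such a set $E$; because $c=\sum_k a_k c_k\geq a_2 c_2$ with $a_2>0$, the mixture density is also positive on $E$, so the mixture chain is itself absolutely regular. Combining this with the exponential $\rho$-mixing from Part~1, Theorem~4 of Longla and Peligrad (2012) (the same tool used in the proof of Theorem~\ref{Theo6}) yields geometric ergodicity.

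The only delicate point is conceptual rather than computational: the one-step mixture kernel is a linear combination of the component densities, but the $n$-step kernels of the mixture chain are \emph{not} linear combinations of the component $n$-step kernels, since transition operators do not compose linearly. This rules out any naive iteration argument, and it is precisely the reduction to $\rho_1<1$ at the one-step level that makes the proof go through; likewise, the transfer of absolute regularity from the single component $C_2$ to the full mixture requires the one-step characterization supplied by Proposition~2 of Longla and Peligrad (2012).
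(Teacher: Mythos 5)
Part 1 of your argument is essentially the paper's: the paper simply defers to Theorem 5 of Longla and Peligrad (2012), whose proof is exactly the convexity bound $\rho_1\big(\sum_k a_kC_k\big)\le\sum_k a_k\rho_1(C_k)\le a_1\rho_1(C_1)+(1-a_1)<1$ that you write out, followed by $\rho_n\le\rho_1^n$. One repair is needed, though: the theorem does not assume the $C_k$ are absolutely continuous, and in the paper's own applications the components $M$ and $W$ are purely singular, so your opening premise that the mixture ``has absolutely continuous density $\sum_k a_kc_k$'' does not apply. You should run the same computation with $\int_{I^2}f(x)g(y)\,C(dx,dy)=\sum_k a_k\int_{I^2}f(x)g(y)\,C_k(dx,dy)$ in place of the density form; nothing else changes.

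Part 2 contains a genuine gap. Proposition 2 of Longla and Peligrad (2012) is a one-way implication: strict positivity of the density of the absolutely continuous part on a set of Lebesgue measure $1$ implies absolute regularity. It is not a characterization, and its converse is false: the copula with density $c_2(x,y)=\tfrac{3}{2}\,\mathbb{I}(\lfloor 3x\rfloor\neq\lfloor 3y\rfloor)$ generates an irreducible, aperiodic, in fact geometrically ergodic chain, yet $c_2$ vanishes on the three diagonal blocks, a set of measure $1/3$. Hence from the hypothesis that $C_2$ generates an absolutely regular chain you cannot conclude that $c_2>0$ on a set of measure $1$; what your positivity argument actually proves is Corollary \ref{convex}, where that positivity is an explicit hypothesis, not part 2 of the theorem. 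The transfer of absolute regularity to the mixture must instead be done at the level of the kernels: $P=\sum_k a_kP_k\ge a_2P_2$ with $a_2>0$, and since absolute regularity of a stationary Markov chain is equivalent to Harris ergodicity (irreducibility and aperiodicity with respect to the stationary law), these properties pass from $P_2$ to any kernel dominating a positive multiple of it. This is the content of Theorem 5 of Longla and Peligrad (2012), to which the paper's proof defers. Your closing step --- absolute regularity together with $\rho_1<1$ yields geometric ergodicity via Theorem 4 of that paper, with no reversibility needed --- is correct and is exactly the observation the paper makes.
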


\begin{proof}
The proof of the first part of the conclusion is similar to the proof of Theorem 5 of Longla and Peligrad (2012) \cite{Martial1}. We shall note that symmetry of the copulas was not needed there. Symmetry in the proof of Theorem 5 in Longla and Peligrad (2012)\cite{Martial1} was not actually necessary for the conclusion on geometric ergodicity. The proof of the second part does not need the reversibility condition as well because what matters is that the Markov chain that is generated by the convex combination is absolutely regular and $\rho$-mixing by part one. So, all arguments of the proof of Theorem 5 in Longla and Peligrad (2012) \cite{Martial1} are valid.
\end{proof}

\begin{remark}
Basically, this theorem states that any convex combination of copulas that contains a copula that generates $\rho$-mixing will inherit this property.This happens no matter what are the mixing properties of other copulas in the combination. Moreover, if this convex combination generates absolutely regular Markov chains, then this combination generates geometrically ergodic Markov chains.
\end{remark}
This is an implication of this result that can be useful in applications.

\begin{corollary}
\label{convex}
\quad

Assume $(C_{k}(x,y)$; $1\leq k\leq n)$ are $n$ copulas
and $C_{1}(x,y)$ has the density of its absolute continuous part strictly
positive on a set of Lebesgue measure $1.$ Assume $C_{2}(x,y)$ generates a $\rho$-mixing stationary Markov chain. Then, any convex combination, $\sum_{k=1}^{n}a_{k}%
C_{k}(x,y)$ with $\sum_{k=1}^{n}a_{k}=1,$ $0\leq a_{i}\leq1$, $a_{1}\neq0$ and $a_{2}\neq0$
generates a geometrically ergodic Markov chain.
\end{corollary}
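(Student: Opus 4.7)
The strategy is to reduce the corollary to Part~2 of the preceding theorem by relabeling: the role played there by the ``exponential $\rho$-mixing'' copula will be filled by the corollary's $C_2$, and the role of the ``absolutely regular'' copula will be filled by the corollary's $C_1$.

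First, I would upgrade the $\rho$-mixing hypothesis on $C_2$ to exponential $\rho$-mixing. This is free of charge: the paper recalls at the start of Section~2 that for a Markov chain $\rho$-mixing is equivalent to $\rho_1 < 1$, which in view of the inequality $\rho_n \leq (\rho_1)^n$ from~(1) is the same as exponential $\rho$-mixing. So the chain generated by $C_2$ is in fact exponential $\rho$-mixing.

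Second, I would show that $C_1$ generates an absolutely regular Markov chain. By hypothesis, the density of the absolute continuous part of $C_1$ is strictly positive on a set of Lebesgue measure $1$, and this is precisely the hypothesis under which Proposition~2 of Longla and Peligrad (2012) delivers absolute regularity. The same implication is already used inside the proof of Theorem~\ref{Theo6}, so nothing new is required.

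Finally, because $a_1, a_2 > 0$, the convex combination $\sum_{k=1}^n a_k C_k$ contains both $C_1$ and $C_2$, so Part~2 of the preceding theorem applies (with the roles of $C_1$ and $C_2$ interchanged relative to the statement of the theorem) and yields directly that the combination generates a geometrically ergodic Markov chain. I do not expect a real obstacle here; the only thing to watch is the bookkeeping, since the theorem and the corollary use the labels $C_1$ and $C_2$ with opposite meanings.
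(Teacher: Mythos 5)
Your proposal is correct and matches the paper's (implicit) argument: the corollary is stated as a direct consequence of Part~2 of the preceding theorem, with $C_1$ supplying absolute regularity via Proposition~2 of Longla and Peligrad (2012) and $C_2$ supplying exponential $\rho$-mixing via the Markov-chain equivalence of $\rho$-mixing and exponential $\rho$-mixing recalled at the start of Section~2. The only nitpick is that the cleanest justification of that upgrade is the submultiplicativity $\rho_{m+n}\leq\rho_m\rho_n$ (so $\rho_n\to 0$ forces exponential decay), rather than literally identifying $\rho$-mixing with $\rho_1<1$; this does not affect the argument.
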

\bigskip
\subsection{Applications}
The Hoeffding lower and upper bounds are defined respectively as $W(x,y)=\max{(x+y-1,0)}$ and $M(x,y)=\min{(x,y)}$. The independence copula is defined by $P(x,y)=xy$.
\begin{enumerate}

\item The Mardia family of copulas 
\begin{equation}
C(x,y)=\frac{\theta^{2}(1+\theta)}{2}M(x,y)+(1-\theta^2)P(x,y)+\frac{\theta^{2}(1-\theta)}{2}W(x,y), \label{Mardia} \quad \theta\in[-1,1],
\end{equation}
 The formula (\ref{Mardia}) defines the Mardia family of copulas.  This family of copulas is a convex combination of the copulas $M$, $W$ and $P$.
\item The Frechet family of copulas 
\begin{equation}
C(x,y)=a M(x,y) +(1-a-b)P(x,y)+b W(x,y) \quad (0\leq a+b\leq 1). \label{Frechet}
\end{equation}
The formula (\ref{Frechet}) defines the Frechet family of copulas. Notice that a Mardia copula is a Frechet copula with $a+b=\theta^{2}$. So, they will share the same properties.
Given that the copula $P$ generates geometrically ergodic and $\rho$-mixing stationary Markov chains and these convex combinations are symmetric, we can conclude by the above Corollary \ref{convex} that these families generate exponential $\rho$-mixing and geometrically ergodic reversible Markov chains for $a+b\ne 1$ ($\theta^{2}\neq 1$) respectively.
\end{enumerate}
\begin{acknowledgment}
The Author thanks his advisor M. Peligrad for useful comments and all the help she provides in everything.
\end{acknowledgment}


\begin{thebibliography}{100}

\bibitem{Beare} Beare, B.K., 2010. Copulas and Temporal Dependence.  Econometrica, Volume 78, Issue 1, pages 395–410.

\bibitem{Brad}  Bradley, R.C., 2007. Introduction to strong mixing conditions. V 1. Kendrick press.

\bibitem {Chen-Fan} Chen, X.,  Fan, Y., 2006. Estimation of copula-based
semiparametric time series models. Journal of Econometrics, \textbf{130}, 307--335.


\bibitem{Wei} Chen, X., Wu, W. B., Yi, Y., 2009. Efficient estimation of copula-based semi-parametric Markov models. The annals of statistics. Vol. 37. No. 6B, 4214-4253.

\bibitem{Clayton} Clayton, D.G., 1978. A model for association in bivariate life tables and its applications in epidemiological studies of familial tendency in chronic disease incident. Biometrika, 65, 141-151.

\bibitem{Radu} Craiu, R. V., 2011. Tuning of Markov Chain Monte Carlo algorithms using copulas, U.P.B. Sci. Bull., Series A, Vol. 73, Issue 1.

\bibitem{Darsow} Darsow, W. F., Nguyen, B.,  Olsen, E. T., 1992. Copulas and Markov processes. Illinois journal of mathematics. v. 36, No.4, Winter 1992.

\bibitem {DG} De La Pe\~{n}a, V., Gin\'{e}, E., 1999. Decoupling:
From dependence to independence: Randomly stopped processes, U-statistics and
Processes, Martingales and Beyond. Springer-Verlag, New York.

\bibitem{Embrechts} Embrechts, P.,  Lindskog, F.,  McNeil, A., 2003. Modelling dependence with copulas and applications to risk management. In Handbook of Heavy Tailed Distributions in Finance, ed. S. Rachev, Elsevier, 329-384.

\bibitem {IL} Ibragimov,R., Lentzas, G., 2009. Copulas and long memory,
Harvard Institute of Economic Research Discussion Paper No. 2160.

\bibitem{Jarner} Jarner, S.F., Hansen, E., 2000. Geometric ergodicity of metropolis algorithms. Stochastic Processes and their Applications, \textbf{85},341-361.

\bibitem{Joe} Joe, H., 1997. Multivariate Models and Dependence Concepts. Chapman and Hall, London. 

\bibitem{Martial1} Longla, M., Peligrad, M. 2012. Some Aspects of Modeling Dependence in Copula-based Markov chains. Journal of Multivariate Analysis,\textbf{111},234–240.

\bibitem{Mengersen} Mengersen, K. L., Tweedie, R.L., 1996. Rates of convergence of the hastings and metropolis algorithms. The Annals of Statistics, \textbf{24(1)},101-121.

\bibitem{Nelsen} Nelsen, R.B., 2006. An introduction to copulas. 2nd ed. Springer, New York.

\bibitem{Magda} Peligrad. M, 1997. On the central limit theorem for triangular arrays of $\phi$-mixing sequences. Asymptotic methods in statistics and probability (Ottawa, Ontario ) 49-55.

\bibitem{Magda2} Peligrad, M., 1993. Asymptotic results for $\phi$-mixing sequences. Doeblin and Modern probability (H. Cohn ed.) Contemporary Mathematics 149, 171-177. American Mathematical Society; Providence, Rhodes Island.

\bibitem{Nemm}  Nummelin, E., Tuominen, P., 1982.  Geometric ergodicity of harris-recurrent markov chains with application to renewal theory, Stochastic Processes and Applications \textbf{12}, 187-202. 

\bibitem{Rob}  Roberts, G.O., Rosenthal,J. S., 1997. Geometric ergodicity and hybrid Markov chains. Electronic communications in probability, 2, 13-25.

\bibitem{Roberts-Rosenthal} Roberts, G. O., Rosenthal,  J. S., 2001. Optimal scaling for various Metropolis-Hastings algorithms. Statist. Sci., \textbf{16(4)},351-367.

\end{thebibliography}
\end{document}